\newtheorem{thm}{Theorem}
\newtheorem*{thm*}{Theorem}
\newtheorem{lem}{Lemma}
\theoremstyle{definition}
\newtheorem{defn}{Definition}
\theoremstyle{remark}
\DeclareMathOperator{\conv}{conv}
\DeclareMathOperator{\cl}{cl}
\DeclareMathOperator{\rot}{rot}
\DeclareMathOperator{\aco}{aco}
\newcommand{\head}{\mathop{\tt head}}
\newcommand{\tail}{\mathop{\tt tail}}
\newcommand{\conc}{\circ}
\renewcommand{\int}{\mathop{\rm int}}
\renewcommand{\epsilon}{\varepsilon}
\begin{document}

\title{A measure of non-convexity in the plane and the Minkowski sum}

\author{R.N.~Karasev}
\thanks{This research was partially supported by the Dynasty Foundation.}

\email{r\_n\_karasev@mail.ru}
\address{
Roman Karasev, Dept. of Mathematics, Moscow Institute of Physics
and Technology, Institutskiy per. 9, Dolgoprudny, Russia 141700}

\keywords{Minkowski sum, weak convexity}
\subjclass[2000]{52A10, 52A30}

\begin{abstract}
In this paper a measure of non-convexity for a simple polygonal region in the plane is introduced. It is proved that for ``not far from convex'' regions this measure does not decrease under the Minkowski sum operation, and guarantees that the Minkowski sum has no ``holes''.
\end{abstract}

\maketitle

\section{Introduction}

Let us state the definition of the Minkowski sum of two sets $A, B\subset\mathbb R^d$.

\begin{defn} The \emph{Minkowski sum} is
$$
A+B = \{a+b : a\in A,\ b\in B\}.
$$
\end{defn}

In this paper we consider Minkowski sums in the plane. It is well-known, that the Minkowski sum of two convex sets is again convex. In the case of convex polygons it is computed by a simple ``edge merging and slope sorting'' algorithm. If we consider non-convex polygons, the computation of the Minkowski sum may require more complicated algorithms, see~\cite{afh2000, bdd2001, bhkw2007} for example. In the cited papers the problem of finding the Minkowski sum arised from packing or motion planning problems. Indeed, the set of possible shifts of a region $A$, that intersect another region $B$ is the Minkowski sum $(-A)+B$, where minus denotes the reflection w.r.t. the origin.

The most straightforward way to find the Minkowski sum of non-convex regions is to partition every non-convex region into convex polygons, calculate Minkowski sums of parts, and then take the union~\cite{afh2000}. In some practical applications, where the regions are essentially non-convex, this approach can be too complicated, in such cases it is convenient to use the intuitive ``orbital'' (or ``sliding'') methods, see~\cite{bdd2001,bhkw2007}. The latter methods deal with non-convex regions quite well, but the essential (and hard) part of these methods is finding ``holes'' in the Minkowski sum. Therefore, it is important to give a computable criterion for the Minkowski sum to have no ``holes''.

In section~\ref{definitions} we define a measure of non-convexity $\aco K$ for a simply-connected polygonal region (simple polygon) $K$ in the plane so that $\aco K \le 0$ in general and $\aco K = 0$ iff $K$ is convex. This measure of non-convexity uses some essential properties of $\mathbb R^2$, some other definitions of non-convexity measures valid for spaces of arbitrary (even infinite) dimensions are reviewed in~\cite{iva2005}. Another general non-convexity measure based on the path metric in $K$ can be found in~\cite{pan2001}.

The main result is stated as follows.

\begin{thm}
\label{aconv-add}
Suppose $K$ and $L$ are simple polygons, and $\aco K, \aco L > -\pi$. Then $K+L$ is a simple polygon and 
$$
\aco (K+L) \ge \min\{\aco K, \aco L\}.
$$
\end{thm}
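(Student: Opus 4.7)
The plan is to exploit the fact that $\partial(K+L)$ is controlled combinatorially by merging the Gauss maps of $\partial K$ and $\partial L$. For each outer unit direction $u\in S^1$, the face of $K+L$ supported in direction $u$ equals the Minkowski sum of the support faces of $K$ and $L$ in direction $u$. Thus every edge of $K+L$ is a translate of an edge of $K$ or of $L$, and every vertex of $K+L$ arises from a pair of boundary features of $K$ and $L$ whose outer normal directions are compatible. I would start by writing this correspondence down explicitly, obtaining a parametrisation of a tentative boundary cycle for $K+L$.

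Next I would verify that $K+L$ is a simple polygon, i.e.\ that the constructed boundary cycle is embedded and bounds a simply-connected region. Any hole in $K+L$ would correspond to a boundary component along which the outer normal rotates by $-2\pi$. Since each reflex vertex of $K$ or $L$ contributes a negative turning of magnitude strictly less than $\pi$, which is precisely the content of $\aco K, \aco L > -\pi$, tracking the turning function along any candidate inner boundary and adding the (positive) turns contributed by the convex portions should rule out a total turning of $-2\pi$.

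For the inequality $\aco(K+L)\ge\min\{\aco K,\aco L\}$ I would examine every reflex feature of $K+L$ in turn. A reflex vertex $w$ of the sum arises in one of three ways: (a) a reflex vertex $v$ of $K$ added to an interior point of an edge of $L$; (b) the symmetric case with $K$ and $L$ interchanged; (c) a sum of two vertices, at least one of which is reflex, whose outer normal directions are compatible. Cases (a) and (b) are immediate, since the local wedge at $w$ is a translate of the local wedge at $v$, so the reflex angle, and hence the local contribution to $\aco$, is unchanged.

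The main obstacle is case (c), and more generally the control of what happens when local non-convexities of $K$ and $L$ overlap angularly in the Gauss picture. The hypothesis $\aco K, \aco L > -\pi$ is essential here: if a reflex wedge of $K$ and one of $L$ could together span more than $\pi$ in the Gauss map, they could in principle combine to a strictly sharper reflex in $K+L$, or even cause the tentative boundary cycle to fail to embed. Ruling out such overshoot by a careful local angular computation is where the bulk of the work lies, and it is what simultaneously delivers both the simplicity of $K+L$ and the announced inequality on $\aco$.
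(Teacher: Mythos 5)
Your proposal has a genuine gap, and it sits exactly where you say ``the bulk of the work lies.'' Two specific problems. First, you misread the hypothesis: the statement that ``each reflex vertex of $K$ or $L$ contributes a negative turning of magnitude strictly less than $\pi$'' is automatic for any polygon (a single turning angle always lies in $(-\pi,\pi)$) and is \emph{not} the content of $\aco K>-\pi$. The quantity $\aco$ is the minimum of the \emph{cumulative} rotation over all connected subpolylines of the boundary, so the condition constrains long stretches containing many reflex vertices. Consequently your case analysis (a)/(b)/(c) of individual reflex vertices of $K+L$ cannot yield $\aco(K+L)\ge\min\{\aco K,\aco L\}$: you must bound the rotation of every subpolyline of $\partial(K+L)$, and such a subpolyline interleaves edges coming from $K$ and from $L$ in a way that no purely local computation at one vertex controls. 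Second, the Gauss-map merging of $\partial K$ and $\partial L$ does not parametrise $\partial(K+L)$ when the polygons are non-convex: the merged cycle (the paper's ``sorted sum'' of the two full boundaries) generally self-intersects and has whole arcs lying in the interior of $K+L$, so your ``tentative boundary cycle'' is not the boundary, and simplicity of $K+L$ cannot be read off from its turning function.

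The paper's actual mechanism, which your proposal is missing, is this. To prove $\aco B\ge\gamma$ for the outer boundary $B$, one argues by contradiction: if $\rot[p,q]_B=\delta<\gamma$, one forms the sorted sum of the \emph{complementary} arcs $[c,a]_{\partial K}+_s[d,b]_{\partial L}$ (where $p=a+b$, $q=c+d$); this polyline lies inside $K+L$, has rotation $-\delta$ and angular convexity $\ge\gamma>-\pi$ (Lemmas~\ref{sorted-sum-param}--\ref{sorted-sum-aco}), and after loop elimination (Lemma~\ref{loop-elimination}) it closes up with $[p,q]_B$ into a simple closed curve whose rotation number must be $-2\pi$, contradicting the orientation of $B$. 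The absence of holes is then a separate global argument: the ``has a hole'' property is shown to be open using the separation Theorem~\ref{aconv-sep}, and a continuity argument on the family $K+tL$, $t\in(0,\infty)$, rules holes out. None of these ingredients --- the sorted sum of complementary arcs, loop elimination, the separating angular region, the deformation in $t$ --- appears in your outline, and without some substitute for them the key inequality and the simplicity claim both remain unproved.
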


This theorem shows that the property $\aco K > -\pi$ is stable under the Minkowski sum, and in this case the sum of an arbitrary number of simple polygons is simple. 

In the proof of Theorem~\ref{aconv-add} we use the following fact, which has its own value.
It generalizes the separation theorem for convex sets.

\begin{thm}
\label{aconv-sep}
Suppose $K$ is a simple polygon with $\aco K > -\pi$. Then for any point $x\not\in K$ there exist an angular region $A$ with apex $x$, such that $\angle A = \pi + \aco K$ and 
$$
A\cap K= \emptyset.
$$
\end{thm}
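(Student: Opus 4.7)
The plan is to mimic the classical separation argument for convex sets and use the non-convexity measure $\aco K$ to quantify by how much the separating half-plane must be shrunk to a narrower angular region.

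First, I would pick $y\in K$ minimizing the Euclidean distance to $x$. Since $K$ is compact and $x\notin K$, such a $y$ exists, lies on $\partial K$, and the open disk $D$ of radius $|xy|$ centered at $x$ is disjoint from $K$. Consider the closed angular region $A_0$ at apex $x$, of opening $\pi$, whose bisector is the ray from $x$ through $y$ and whose boundary rays are tangent to $\partial D$. By construction $A_0\cap D=\emptyset$, so any intersection of $A_0$ with $K$ occurs at distance at least $|xy|$ from $x$. If $A_0\cap K=\emptyset$ we already have a cone of opening $\pi\ge\pi+\aco K$, so we may assume otherwise.

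Second, I would symmetrically shrink $A_0$ about its bisector to obtain an angular region $A$ of opening $\pi+\aco K$ and argue that $A\cap K=\emptyset$. The heart of the matter is to use the definition of $\aco K$ from Section~\ref{definitions} to bound the angular spread at $x$ of the boundary arcs that obstruct the cone; the intuition is that $-\aco K$ measures the maximal ``reflex deviation'' that $\partial K$ can exhibit, so the obstructing arcs inside $A_0\setminus A$ should cover an angular sector of measure at most $-\aco K/2$ on each side of the bisector $\overrightarrow{xy}$.

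The main obstacle is precisely this last step: turning the global invariant $\aco K$ into a local angular bound at $x$. The difficulty is that $x$ may see several disjoint components of $\partial K$, so the obstruction need not be concentrated near $y$, and the nearest-point property pins $\partial K$ only at $y$. I would argue by contradiction: assume $A\cap K$ contains a point $z$ of maximal angular offset from the bisector $\overrightarrow{xy}$, and trace $\partial K$ from $z$ back toward a point near $y$ using connectedness of $\partial K$ and the Jordan curve theorem; the resulting configuration of boundary points, together with the ``extremal'' vertex $z$, should exhibit a deviation violating the defining property of $\aco K$. Once this angular bound is established, the existence of the separating cone $A$ of opening $\pi+\aco K$ follows immediately by the symmetric shrinking described above.
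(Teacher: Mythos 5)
There is a genuine gap, and it sits exactly where you say it does: the step that turns the global quantity $\aco K$ into an angular bound at $x$ is not carried out, and the specific construction you propose for it is false. The free cone at $x$ need \emph{not} be symmetric about the line through $x$ and the nearest point $y$, so ``symmetric shrinking of $A_0$ about its bisector'' cannot work. Concretely, take $K$ to be an L-shaped polygon with reflex vertex at the origin, so that near the origin the complement of $K$ is the open first quadrant and $\aco K=-\pi/2$. For $x=(2,\,0.1)$ the nearest point is $y=(2,0)$, so your bisector is vertical; but the ray from $x$ in direction $3\pi/4$ (well inside the cone of opening $\pi/2$ symmetric about the vertical) hits the vertical bar of the L, while the actual free cone of opening $\pi+\aco K=\pi/2$ is the set of directions $[0,\pi/2]$, tilted by $\pi/4$ off your bisector. (Your set-up also has smaller slips: the bisector ``from $x$ through $y$'' points \emph{into} $K$, and rays emanating from the center of $D$ cannot be tangent to $\partial D$; presumably you meant the opposite ray and the half-plane bounded by the line through $x$ perpendicular to $xy$.) The remaining sketch --- take a point $z\in A\cap K$ of maximal angular offset and trace $\partial K$ back to $y$ --- does not identify which subpolyline of $\partial K$ is supposed to have rotation below $\aco K$, and since the correct cone is not anchored to $y$ at all, this plan cannot be completed as stated.

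What the paper does instead is to build the cone at a boundary point first and only then transport it to $x$. For each $p\in\partial K$ (after smoothing the corners) one sets
$\gamma_+(p)=\min_q\rot[p,q]$ and $\gamma_-(p)=\min_q\rot[q,p]$ over subarcs of the boundary, and takes the rays $r_\pm(p)$ obtained by rotating the tangent at $p$ by these amounts; the resulting angular region $A(p)$ has opening at least $\pi+\aco K$, and a rotation-number argument (a boundary arc entering $\int A(p)$ across $r_+(p)$ or $r_-(p)$ would close up to a simple curve of rotation $-2\pi$, contradicting the minimality of $\gamma_\pm(p)$) shows $A(p)\cap K=\emptyset$. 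The apex is then moved to a given exterior point $x$ by a degree argument: the bisector direction of $A(p)$ traces $S^1$ with degree $1$ as $p$ runs over $\partial K$, while the direction from $p$ to $x$ has degree $0$, so they agree at some $p$, and translating $A(p)$ to $x$ gives the required region. If you want to salvage your outline, the missing idea is precisely this: the orientation of the cone must be chosen by a global (degree or intermediate-value) argument over all of $\partial K$, not read off from the nearest point.
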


\section{Definition of angular convexity}
\label{definitions}

Now we make some definitions and fix some notation.

\begin{defn}
The sequence of points $v_1, \ldots, v_n\in \mathbb R^2$ (vertices) and the corresponding segments $v_1v_2, \ldots, v_{n-1}v_n$ (edges) is called a \emph{polyline}. We require that the consecutive vertices do not coincide $v_i\not=v_{i+1}$. 

For a polyline $P=v_1\ldots v_n$ we call the sequence of vectors $v_2-v_1,\ldots, v_n-v_{n-1}$ the \emph{shift sequence} and denote it $S(P)$.
\end{defn}

\begin{defn}
We call a polyline $v_1v_2\ldots v_{n+1}$ \emph{closed} if $v_1=v_{n+1}$. In this case we often index its vertices modulo $n$. 
\end{defn}

\begin{defn}
We call a polyline \emph{simple} if it has no self-intersections, i.e. its edges may intersect in one point if they are consecutive, otherwise they do not intersect. In a closed polyline we, of course, allow the first vertex to equal the last one.
\end{defn}

\begin{defn}
We call a compact set $K\subset\mathbb R^2$ a \emph{simple polygon} if its boundary is a closed simple polyline.
\end{defn}

It is obvious that generally the Minkowski sum of two simple polygons can be not simple, speaking informally it can have ``holes''. The objective of this paper is to find some sufficient conditions on the polygons that guarantee that the Minkowski sum is simple. We need some more definitions.

\begin{defn}
For two plane vectors define the \emph{skew product}
$$
[v, w] = v_xw_y - v_yw_x.
$$
Two nonzero vectors $v, w$ are called \emph{parallel} if $v = \alpha w$. In this case, if $\alpha<0$ we call the vectors \emph{opposite}.
\end{defn}

\begin{defn}
For two non-opposite vectors $v, w$ denote $\angle (v, w)$ the angle between vectors with sign, positive if $[v,w] >0$, negative if $[v,w] <0$.
\end{defn}

\begin{defn}
If the shift sequence of a polyline $P$ does not contain a consecutive pair of opposite vectors, we call $P$ \emph{non-reverse}.
\end{defn}

\begin{defn}
Let a polyline $P$ with shift sequence $S(P)=(s_1, \ldots, s_n)$ be non-reverse. The \emph{rotation} of $P$ (the same as rotation of $S(P)$) is 
$$
\rot P = \sum_{i=1}^{n-1} \angle(s_i, s_{i+1}).
$$
If the polyline has $2$ vertices we put $\rot P = 0$.

If the polyline $P$ is closed, then (the indices are modulo $n$)
$$
\rot P = \sum_{i=1}^n \angle(s_i, s_{i+1}).
$$
\end{defn}

\begin{defn}
Let $P$ be a non-reverse polyline. Denote the \emph{angular convexity} of $P$
$$
\aco P = \min_{L\subseteq P} \rot L,
$$
where the minimum is taken over subpolylines $L\subseteq P$ (obtained from $P$ by removing some vertices from its front and/or its back). 
\end{defn}

\begin{defn}
Let $K$ be a simple polygon and $P=\partial K$ be oriented so that $\rot P = 2\pi$. Denote the \emph{angular convexity} of $K$
$$
\aco K = \min_{L\subseteq P} \rot L,
$$
where the minimum is taken over simple polylines $L\subseteq P$, oriented along $P$. 
\end{defn}

The angular convexity of a polygon $K$ is illustrated on its slope diagram (dependance of the slope angle on the anticlockwise parameter) in Figure~\ref{slope-diag}

\begin{figure}
\includegraphics{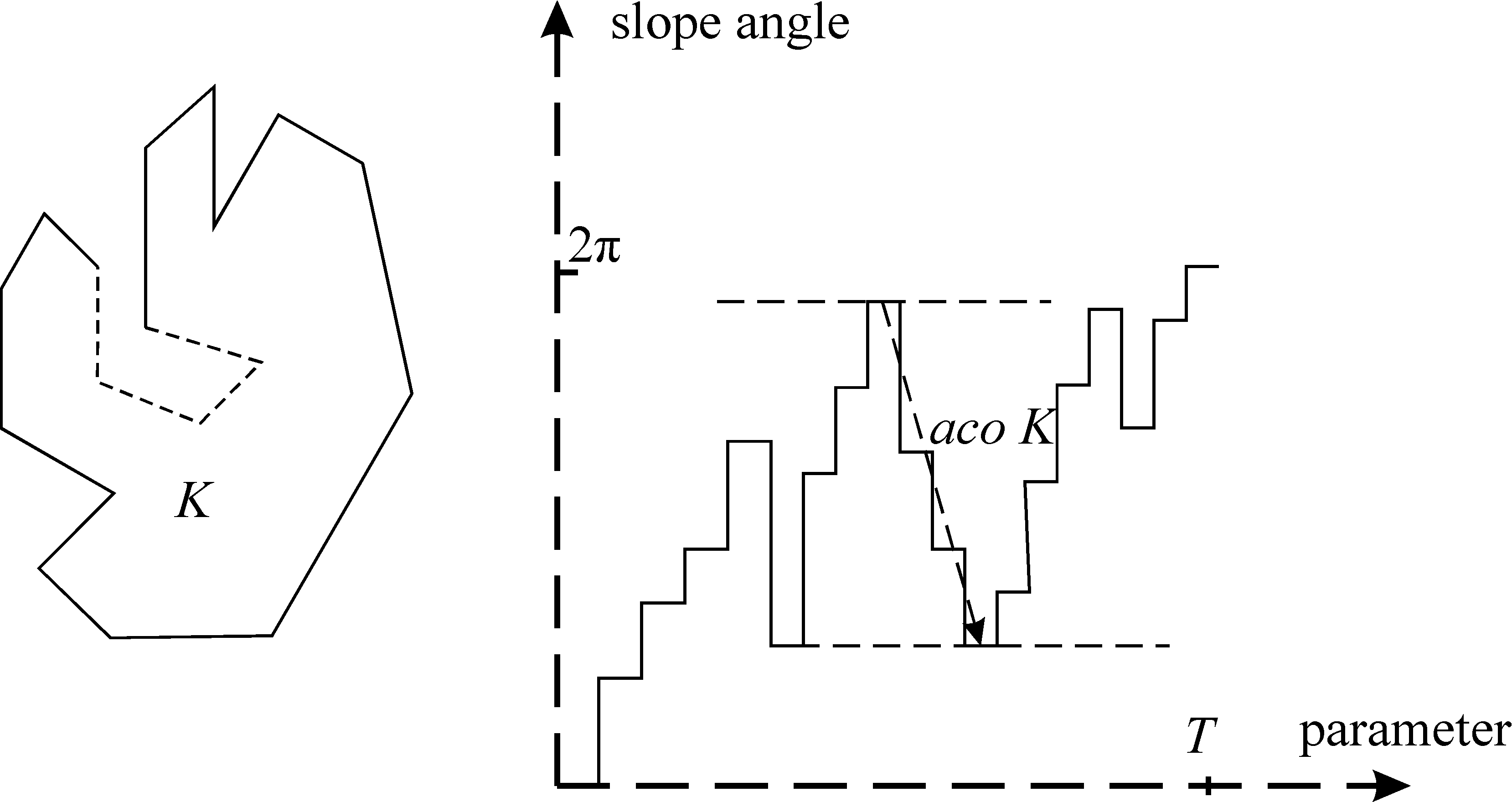}
\caption{Slope diagram and the angular convexity of a polygon, $T$ is the full length.}
\label{slope-diag}
\end{figure}

\section{The sorted sum of polylines}

Here we formulate and prove some lemmas that generalize the algorithm of finding Minkowski sum for convex polygons by edge sorting. We start from the definition. 

\begin{defn}
Denote the concatenation of sequences $S$ and $S'$ by $S\conc S'$. Denote the first element of nonempty $S$ as $\head S$, denote the sequence $S$ without its first element by $\tail S$.
\end{defn}

\begin{defn}
\label{sorted-sum}
Suppose two polylines $P=v_1,\ldots, v_{n+1}$ and $Q=w_1,\ldots, w_{m+1}$ are non-reverse, let their shift sequences be $S(P)=s_1, \ldots, s_n$ and $S(Q) =t_1, \ldots, t_m$. Suppose also that $\angle(s_1,t_1)= 0$ and $\rot P = \rot Q$.

Define the \emph{sorted sum} $R = P+_s Q$ as follows. Let the first vertex be $v_1 + w_1$, let the shift sequence $S(P+_s Q)$ be formed by the following rule: 

1) Start from $S_1 = S(P), S_2 = S(Q), S_3 = \emptyset$;

2) If $S_1=\emptyset$ and $S_2=\emptyset$ then put $S(P+_sQ)= S_3$ and quit;

3) If $S_1=\emptyset$ then put $S(P+_sQ) = S_3 \conc  S_2$ and quit;

4) If $S_2=\emptyset$ then put $S(P+_sQ) = S_3 \conc  S_1$ and quit;

5) If $\rot S_1 \ge \rot S_2$, then put $S_3 = S_3 \conc \head S_1$, $S_1 = \tail S_1$ and go to step $2$;

6) If $\rot S_1 < \rot S_2$, then put $S_3 = S_3 \conc  \head S_2$, $S_2 = \tail S_2$ and go to step $2$.
\end{defn}

Informally, we merge the shifts from two sequences in such a way, that from the two possibilities to choose the next shift we always choose the ``rightmost'' one, preferring the first sequence if the directions coincide. The condition that the starts and the ends of the shift sequences have the same direction, and the rotations are the same, can be relaxed in general, but it is crucial in Lemmas~\ref{sorted-sum-correct}, \ref{sorted-sum-rot}, and \ref{sorted-sum-aco}. An example of a sorted sum is shown in Figure~\ref{sorted-sum-pic}.

\begin{figure}
\includegraphics{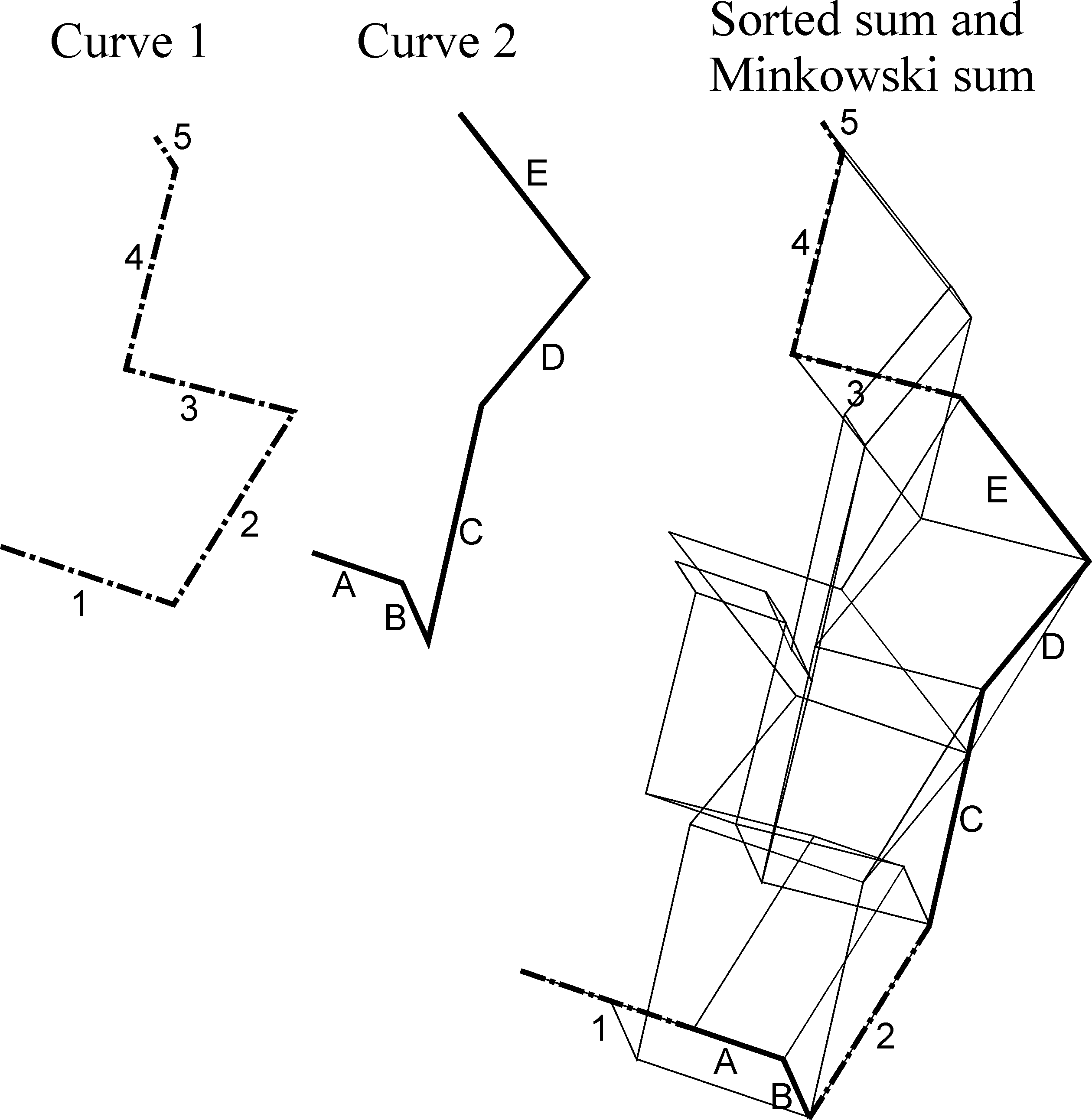}
\caption{The Minkowski sum and the sorted sum of two polylines.}
\label{sorted-sum-pic}
\end{figure}

First, we show how the sorted sum is related to the Minkowski sum.

\begin{lem}
\label{sorted-sum-param}
Let the polylines $P$ and $Q$ be as in Definition~\ref{sorted-sum} and let them have piece-wise linear parameterizations 
$$
P = \{p(t) : t \in [0, 1]\},\quad Q = \{q(t) : t \in [0, 1]\}.
$$
Then there are non-decreasing piece-wise linear functions (onto) $\phi, \psi : [0, 1] \to [0,1]$ such that
$$
r(t) = p(\phi(t)) + q(\psi(t))
$$
is a parameterization of $R$, in particular $P+_s Q \subseteq P+Q$ (the Minkowski sum of $P$ and $Q$ as subsets of the plane).
\end{lem}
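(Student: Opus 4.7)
The construction of $\phi$ and $\psi$ can be done directly by following the merge procedure of Definition~\ref{sorted-sum} step by step. Parametrize $P$ piecewise linearly so that its $i$-th edge corresponds to the subinterval $I_i^P = [(i-1)/n, i/n]\subset[0,1]$; analogously parametrize $Q$ with edge-intervals $I_j^Q = [(j-1)/m, j/m]$. The sorted sum $R$ has exactly $n+m$ shifts (the merge consumes each input shift once), so divide the $R$-parameter interval $[0,1]$ into $n+m$ equal subintervals $J_1,\ldots,J_{n+m}$.

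Next, define $\phi$ and $\psi$ piece by piece, tracking the state of the algorithm. Just before step $k$ executes, suppose we have already produced $i$ shifts from $S(P)$ and $j$ from $S(Q)$ with $i+j=k-1$, and that $\phi$, $\psi$ have reached $i/n$ and $j/m$ respectively at the left endpoint of $J_k$. If step $k$ applies rule~5 (consumes $\head S_1 = s_{i+1}$), let $\phi$ rise linearly across $J_k$ from $i/n$ to $(i+1)/n$ while $\psi$ remains constant at $j/m$; in the symmetric rule~6 case, swap the roles. The resulting maps are continuous (values match at the boundaries between consecutive $J_k$'s by construction), non-decreasing, and piecewise linear. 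The algorithm terminates precisely when both $S(P)$ and $S(Q)$ are exhausted, so $\phi(1)=\psi(1)=1$ and both maps are onto $[0,1]$.

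It remains to verify that $r(t)=p(\phi(t))+q(\psi(t))$ parameterizes $R$. On each subinterval $J_k$, exactly one of $\phi,\psi$ advances linearly across a whole edge-subinterval while the other is constant, so $r|_{J_k}$ is a linear segment whose displacement equals the $k$-th shift selected by the merge procedure; this is by definition the $k$-th entry of $S(R)$. Combined with $r(0)=p(0)+q(0)=v_1+w_1$ matching the prescribed starting vertex of $R$, this shows that $r$ is indeed a parameterization of $R$. The inclusion $P+_sQ\subseteq P+Q$ then follows instantly, since $p(\phi(t))\in P$ and $q(\psi(t))\in Q$ for every $t$.

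The proof is essentially bookkeeping, and I do not anticipate a substantive obstacle. The only points to watch are that the algorithm truly consumes every input shift (visible from rules 2--4, which guarantee surjectivity of $\phi$ and $\psi$) and that the advance/constant alternation is consistent at the breakpoints between the $J_k$. Note that for this lemma the specific comparison $\rot S_1 \gtreqless \rot S_2$ in rules~5--6 plays no role; any merge order would yield an analogous parameterization and hence the same containment. The sorting order will matter for the subsequent lemmas (such as Lemma~\ref{sorted-sum-correct}).
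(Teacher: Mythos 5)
Your proposal is correct and follows essentially the same route as the paper: both parametrize the edges of $P$ and $Q$ over equal subintervals of length $1/n$ and $1/m$, split the $R$-parameter into $n+m$ equal pieces, and let exactly one of $\phi,\psi$ advance across each piece according to which sequence supplied that shift in the merge. Your closing observation that the comparison in rules 5--6 is irrelevant for this particular lemma is accurate and consistent with the paper, which likewise uses only the fact that each shift is consumed exactly once.
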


\begin{proof}
Let $|S(P)| = n$, $|S(Q)| = m$, then $|S(P+_sQ)| = n+m$. Without loss of generality we may assume that the parameterization $p$ maps points $0, \frac{1}{n}, \frac{2}{n}, \ldots, \frac{n-1}{n}, 1$ to the vertices of $P$, and $q$ maps points $0, \frac{1}{m}, \frac{2}{m}, \ldots, \frac{m-1}{m}, 1$ to the vertices of $Q$.

The functions $\phi$ and $\psi$ will be continuous, piece-wise linear. We will construct them so that $\phi$ on a segment $\left[\frac{i-1}{n+m}, \frac{i}{n+m}\right]$ ($i=1, \ldots, n+m$) is either constant or linearly increases by $\frac{1}{n}$, and $\psi$ on the same segment is either constant or linearly increases by $\frac{1}{m}$. 

By definition put: if the $i$-th shift in $S(P+_sQ)$ is taken from $S(P)$, then $\phi$ increases and $\psi$ remains constant, if it is taken from $S(Q)$, $\phi$ remains constant and $\psi$ increases.

In the first case denote $S(P+_sQ) = S' \conc  s_i \conc  S''$, where $|S'| = i-1$, then for $t_1 = \frac{i-1}{n+m}$ and $t_2=\frac{i}{n+m}$ 
$$
p(\phi(t_1)) + q(\psi(t_1)) = v_1 +w_1 + \sum S'\cap S(P) + \sum S'\cap S(Q) = v_1 + w_1 + \sum S'\in P+_sQ,
$$
and 
$$
p(\phi(t_2)) + q(\psi(t_2)) = v_1 +w_1 + \sum S'\cap S(P) + s_k + \sum S'\cap S(Q) = v_1 + w_1 + \sum S' + s_i\in P+_sQ.
$$
The above equations mean that the points $p(\phi(t_1)) + q(\psi(t_1))$ and $p(\phi(t_2)) + q(\psi(t_2))$ are two consecutive vertices of the polyline $P+_sQ$, and for $t\in [t_1, t_2]$ the point $p(\phi(t)) + q(\psi(t))$ is on the corresponding segment of $P+_sQ$. 

The second case is considered similarly.
\end{proof}

\begin{lem}
\label{sorted-sum-correct} Suppose $P, Q, R$ are as in Definition~\ref{sorted-sum}. If $\aco P, \aco Q >-\pi$, then $R = P+_s Q$ is non-reverse.
\end{lem}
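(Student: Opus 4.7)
The plan is to check that every consecutive pair of shifts $u, u'$ in the output $S(R)$ consists of non-opposite vectors. Because the merge preserves the relative order within each input, if $u$ and $u'$ are both drawn from $S(P)$, they are literally consecutive in $S(P)$, and the non-reverseness of $P$ (implicit in $\aco P > -\pi$, since $\aco$ is only defined for non-reverse polylines) immediately rules out oppositeness. The case with both from $S(Q)$ is identical.

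The substantive case is when $u$ and $u'$ come from different inputs. Here I would track the cumulative rotations $R_P = \rot P - \rot S_1$ and $R_Q = \rot Q - \rot S_2$ already consumed from each input at the current step. Since $\rot P = \rot Q$, the test $\rot S_1 \ge \rot S_2$ driving step 5 is equivalent to $R_P \le R_Q$, and since $\angle(s_1, t_1) = 0$ the direction of $\head S_i$ is that of $s_1$ rotated by the corresponding $R$-quantity; consequently, if $u = s_k$ is followed by $u' = t_j$, the rotation from $u$ to $u'$ equals $R_Q - R_P$ modulo $2\pi$. When $k < n$, combining the successive selection tests immediately yields $0 \le R_Q - R_P < \angle(s_k, s_{k+1})$ as real numbers, and since $P$ is non-reverse the upper bound is strictly less than $\pi$, so $R_Q - R_P \in [0,\pi)$ and $u, u'$ are not opposite. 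When $k = n$, step 3 appends the leftover $S_2 = (t_j, \ldots, t_m)$ and $u'$ is its head; a short computation gives that the rotation from $u$ to $u'$ equals $-\rot S_2$, the selection rule that let $u$ be chosen forces $\rot S_2 \le 0$, and $\rot S_2 \ge \aco Q > -\pi$ because $S_2$ is the shift sequence of a subpolyline of $Q$, again placing the rotation in $[0,\pi)$. The mirror situation, a $Q$-shift followed by a $P$-shift, is handled analogously, with strict/non-strict inequalities swapped to reflect tie-breaking toward $S_1$ and with $\aco P > -\pi$ used in the boundary subcase.

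The main obstacle is the careful bookkeeping of strict versus non-strict inequalities forced by the asymmetric tie-breaking in steps 5 and 6. Once those inequalities are properly tracked, the hypotheses $\aco P, \aco Q > -\pi$ enter exactly where expected: they are needed only in the boundary subcases where one input sequence has just been exhausted, while the interior subcases rely only on non-reverseness of the inputs.
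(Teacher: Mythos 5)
Your proof is correct and is essentially the paper's argument in contrapositive form: the paper assumes two consecutive mixed shifts are opposite, derives $\rot S_1 \ge \rot S_2 + \pi$ from the selection rule, and reaches a contradiction via non-reverseness of $P$ when $|S_1|\ge 2$ and via $\aco Q > -\pi$ when $|S_1|=1$ --- exactly the two subcases and two hypotheses you use, with your $R_P,R_Q$ being just $\rot P - \rot S_1$ and $\rot Q - \rot S_2$. Your version is marginally more complete in that it explicitly notes the same-source case and both orders of the mixed case, and it pins the angle into $[0,\pi)$ rather than merely excluding $\pi$, but the mechanism is identical.
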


\begin{proof} 
Assume the contrary: its consecutive shifts $s_k$ and $t_l$ (indexed as they were in $P$ and $Q$) are opposite. Take the sequences $S_1$ and $S_2$ from the definition in the state before adding $s_k$ and $t_l$ to $S_3$, so $s_k = \head S_1$, $t_l = \head S_2$.

Since $s_k$ and $t_l$ are opposite $\rot S_1 = \rot S_2 + (2N+1) \pi$, for some $N\in\mathbb Z$. Since $s_k$ comes before $t_l$ in $S(P+_sQ)$, then $\rot S_1 \ge \rot S_2$. Thus
$$
\rot S_1 \ge \rot S_2 + \pi.
$$
If $|S_1| \ge 2$, then by the construction $\rot \tail S_1 \le \rot S_2$, so $\angle (\head S_1, \head\tail S_1) \ge \pi$, which cannot be the case. If $|S_1| = 1$, then $\rot S_2 \le -\pi$, which contradicts with the angular convexity condition.
\end{proof}

\begin{lem}
\label{sorted-sum-rot} Suppose $P, Q, R$ are as in Definition~\ref{sorted-sum}. If $\aco P, \aco Q >-\pi$, then $\rot R = \rot P = \rot Q$.
\end{lem}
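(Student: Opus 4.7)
The plan is to compute $\rot R$ by working with continuous lifts of direction angles. For any non-reverse polyline with shift sequence $s_1,\dots,s_k$, pick a real number $\theta_1$ representing the direction of $s_1$ and inductively set $\theta_{i+1}=\theta_i+\angle(s_i,s_{i+1})$. Each increment lies in $(-\pi,\pi)$, so the rotation of any consecutive subsequence of shifts from position $i$ to position $j$ equals $\theta_j-\theta_i$. Apply this construction to $P$ and $Q$ with matching initial values (possible since $\angle(s_1,t_1)=0$) to obtain lifts $\theta_i(P)$ and $\theta_j(Q)$. The hypothesis $\rot P=\rot Q$ then forces $\theta_n(P)=\theta_m(Q)$; call this common terminal value $\Psi$.

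Reinterpret the merging algorithm in terms of lifts. If $S_1$ currently starts at index $i$ of $P$ and $S_2$ at index $j$ of $Q$, then $\rot S_1=\Psi-\theta_i(P)$ and $\rot S_2=\Psi-\theta_j(Q)$, so the test $\rot S_1\ge \rot S_2$ is equivalent to $\theta_i(P)\le\theta_j(Q)$. Hence the sorted sum merges the shifts in non-decreasing order of lifted angle, with ties broken in favor of $P$. Assign to the $k$-th shift $\sigma_k$ of $R$ the lifted value $\Theta_k$ inherited from its source polyline. By construction $\Theta_1=\theta_1(P)$ and $\Theta_{n+m}=\Psi$, and by Lemma~\ref{sorted-sum-correct} the polyline $R$ is non-reverse, so $\rot R$ is defined. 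The crux is to show $|\Theta_{k+1}-\Theta_k|<\pi$ for every $k$, because then $\angle(\sigma_k,\sigma_{k+1})=\Theta_{k+1}-\Theta_k$ and telescoping immediately yields $\rot R=\Psi-\theta_1(P)=\rot P=\rot Q$.

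That bound splits into three cases. When $\sigma_k$ and $\sigma_{k+1}$ come from the same polyline, the bound is the built-in lift-construction bound for that polyline. When they come from different polylines and $\sigma_k$ is not the last shift of its source, the algorithmic comparisons immediately before and after placing $\sigma_k$ pin $\Theta_{k+1}-\Theta_k$ between $0$ and a single intrinsic lift jump inside $\sigma_k$'s source, which already lies in $(-\pi,\pi)$. The genuinely delicate case is when $\sigma_k$ is the last shift of its source polyline and $\sigma_{k+1}$ opens the appended tail of the other polyline: here $\Theta_{k+1}-\Theta_k$ equals the negative of the rotation of that remaining tail, which is a subpolyline of $\sigma_{k+1}$'s source and so has rotation at least $\min(\aco P,\aco Q)>-\pi$. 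This last case is the main obstacle, and it is the sole step where the hypothesis $\aco P,\aco Q>-\pi$ actually enters the argument.
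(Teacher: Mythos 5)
Your plan is correct and, once the bookkeeping is unwound, rests on the same two ingredients as the paper's proof: the algorithm's comparison $\rot S_1\ge\rot S_2$ translated into an inequality between angle lifts, and the hypothesis $\aco>-\pi$ invoked exactly once, at the moment one source sequence is exhausted. The packaging is genuinely different, though: the paper runs an induction on the merge, maintaining the invariant $\rot(S_3\conc S_1)=\alpha$ (resp.\ $\rot(S_3\conc S_2)=\alpha$) and closing each step with a ``multiple of $2\pi$ of absolute value $<2\pi$, hence zero'' argument, whereas you fix global lifts $\theta_i(P),\theta_j(Q)$ with a common endpoint $\Psi$ and reduce everything to the local estimate $|\Theta_{k+1}-\Theta_k|<\pi$ followed by telescoping. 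Your version is arguably cleaner and makes it transparent where each hypothesis enters. Two small points to repair when writing it out. First, the sentence claiming the sorted sum outputs the shifts ``in non-decreasing order of lifted angle'' is false in general: the merge rule picks the head with the smaller lift, but since $\theta_i(P)$ and $\theta_j(Q)$ need not be monotone in $i$ and $j$, the output $\Theta_k$ need not be monotone either (indeed in your own Case 1 the increment $\angle(s_i,s_{i+1})$ can be negative). Fortunately nothing downstream uses this claim. Second, in the terminal case you only derive the upper bound $\Theta_{k+1}-\Theta_k=-\rot(\text{tail})<\pi$ from $\aco>-\pi$; you also need the lower bound $>-\pi$, which follows because at the step where the last shift of the exhausted sequence was placed, the algorithm's comparison against a singleton (whose rotation is $0$) forces the surviving tail to have rotation $\le 0$, so in fact $\Theta_{k+1}-\Theta_k\ge 0$. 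With these two adjustments the argument is complete.
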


\begin{proof}
Denote $\rot P = \rot Q = \alpha$. To prove $\rot S(P+_sQ) = \alpha$, it is sufficient to prove the following statement: if on some cycle of the construction the last element of $S_3$ is from $S(P)$, then $\rot (S_3 \conc  S_1) = \alpha$, if the last element of $S_3$ is from $S(Q)$, then $\rot (S_3 \conc  S_2) = \alpha$. 

Let us prove it by induction. If $|S_3|\le 2$ the statement is true. Let $S_3 = S'_3 \conc  x \conc y$, if $x$ and $y$ are shifts from $S(P)$, then 
$$
\rot (S_3 \conc  S_1) = \rot ( S'_3 \conc  x\conc y \conc  S_1) = \alpha
$$
by the inductive assumption. The same is true if $x,y\in S(Q)$. It is left to consider the case $x\in S(P), y\in S(Q)$. Consider two cases depending on whether $S_1$ is empty or not.

Case 1: $S_1$ is non-empty. By the construction of the sorted sum we have:
$$
\rot (x\conc  S_1) \ge \rot (y\conc S_2) \ge \rot S_1,
$$
and since $\rot (x\conc  S_1) = \angle(x, \head S_1) + \rot S_1$, then we conclude that 
$$
|\rot (x\conc  S_1) - \rot (y\conc  S_2)| < \pi.
$$

By the inductive assumption $\rot (S'_3 \conc  x\conc  S_1) = \alpha$. The difference 
\begin{multline*}
\rot (S_3 \conc  S_2) - \rot (S'_3 \conc  x\conc S_1) = \rot (x\conc y\conc S_2) - \rot (x\conc S_1) =\\
= \angle(x, y) + \rot (y\conc  S_2) - \rot(x\conc  S_1)
\end{multline*}
should be a multiple of $2\pi$. But its absolute value is less than $2\pi$, since $|\angle(x,y)|<\pi$ and $|\rot (x\conc  S_1) - \rot (y\conc S_2)| < \pi$. Hence $\rot (S_3 \conc  S_2)=\alpha$.

Case 2: $S_1$ is empty. Then, similar to the previous case:
$$
0 \ge \rot (y\conc  S_2),
$$
and  the difference
$$
\rot (S'_3 \conc  x\conc  y\conc  S_2) - \rot (S'_3 \conc x) = \rot (x\conc y\conc  S_2) = \angle(x, y) + \rot (y\conc  S_2)
$$
should be a multiple of $2\pi$. Since $|\angle(x,y)|<\pi$ and $0 \ge \rot (y\conc  S_2) \ge \aco Q$, this difference should be zero.
\end{proof}

\begin{lem}
\label{sorted-sum-aco} Suppose $P, Q, R$ are as in Definition~\ref{sorted-sum}. If $\aco P, \aco Q >-\pi$, then $\aco R \ge \min\{\aco P, \aco Q\}$.
\end{lem}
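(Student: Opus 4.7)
The plan is to set up a cumulative-rotation bookkeeping for $P$, $Q$, and $R$ and then read off the rotation of any subpolyline of $R$ from it. I would write $\Theta_j^P = \rot(p_1\ldots p_j)$ with $\Theta_1^P = 0$, and analogously $\Theta_k^Q$, $\Theta_i^R$. The first step is to prove by induction on $i$ the invariant
$$
\Theta_i^R = \Theta_j^P \text{ if } r_i = p_j, \qquad \Theta_i^R = \Theta_k^Q \text{ if } r_i = q_k.
$$
Two successive shifts of $R$ from the same source give this for free. At a transition $r_i = p_j \to r_{i+1} = q_k$ the algorithm's comparison $\rot S_1 \ge \rot S_2$, combined with $\rot P = \rot Q$ (Lemma~\ref{sorted-sum-rot}), yields $\Theta_j^P \le \Theta_k^Q$; the comparison at the next step (or, when $S_1$ is empty after $r_i$, the hypothesis $\aco Q > -\pi$) supplies the matching upper bound $\Theta_k^Q - \Theta_j^P < \pi$. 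These together force $\angle(p_j, q_k) = \Theta_k^Q - \Theta_j^P$, preserving the invariant. The reverse transition $q_k \to p_j$ is handled the same way, with the strict inequality coming from the $P$-favouring tie-break.

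With the invariant in hand, any subpolyline $L$ of $R$ with shift sequence $r_a, \ldots, r_b$ satisfies $\rot L = \Theta_b^R - \Theta_a^R$, and I would finish by a four-way case analysis on the sources of $r_a$ and $r_b$. If both come from $P$, say $r_a = p_{j_a}$, $r_b = p_{j_b}$, then $\rot L = \Theta_{j_b}^P - \Theta_{j_a}^P = \rot(p_{j_a}\ldots p_{j_b}) \ge \aco P$; the all-$Q$ case is symmetric. For the mixed case $r_a = p_{j_a}$, $r_b = q_{k_b}$, let $q_{\ell_1}$ be the first $Q$-shift occurring in $L$. A short bookkeeping argument (every earlier $Q$-shift, placed in order by the algorithm, must appear in $R$ before position $a$) shows that at step $a$ the head of $S_2$ is exactly $q_{\ell_1}$. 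Since $r_a$ was taken from $S_1$, we get $\Theta_{j_a}^P \le \Theta_{\ell_1}^Q$, and the decomposition
$$
\rot L = \bigl(\Theta_{k_b}^Q - \Theta_{\ell_1}^Q\bigr) + \bigl(\Theta_{\ell_1}^Q - \Theta_{j_a}^P\bigr) \ge \rot(q_{\ell_1}\ldots q_{k_b}) + 0 \ge \aco Q
$$
gives the bound. The remaining mixed case $r_a = q_{k_a}$, $r_b = p_{j_b}$ is symmetric and yields $\rot L \ge \aco P$.

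In every case $\rot L \ge \min\{\aco P, \aco Q\}$, so taking the minimum over subpolylines $L$ gives $\aco R \ge \min\{\aco P, \aco Q\}$. The hard step is the transition case of the induction setting up the invariant: one must use the hypothesis $\aco P, \aco Q > -\pi$ exactly where the second algorithm comparison at a transition is unavailable, i.e., at transitions abutting the end of $S(P)$ or $S(Q)$.
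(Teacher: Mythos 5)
Your proof is correct and is essentially the paper's argument in different notation: your cumulative-rotation invariant $\Theta_i^R=\Theta_j^P$ is exactly the identity $\rot(S_3\conc S_1)=\alpha$ established in Lemma~\ref{sorted-sum-rot}, and your four-way endpoint case analysis, using the algorithm's comparison at the switch point to show the cross term $\Theta_{\ell_1}^Q-\Theta_{j_a}^P$ is nonnegative, matches the paper's treatment of $x,y$ by source. The only cosmetic difference is that you argue directly while the paper argues by contradiction (and note that $\rot P=\rot Q$ is a hypothesis of Definition~\ref{sorted-sum}, not a consequence of Lemma~\ref{sorted-sum-rot}).
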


\begin{proof}
Denote $\alpha = \rot P = \rot Q = \rot R$, $\gamma = \min\{\aco P, \aco Q\}$. 

Assume the contrary: for some subsequence $S$ of the sequence $S(P+_sQ)$ we have $\rot S < \gamma$. It is equivalent to the following statement: the sequence $S_3$ in the construction of sorted sum takes two values $S'_3$ and $S''_3$ (in this order) so that 
$$
\rot S''_3 < \rot S'_3 +\gamma.
$$

Put $S'_3 = T' \conc  x$, $S''_3 = T'' \conc y$, and denote the corresponding values of $S_1$ and $S_2$ by $S'_1, S'_2, S''_1, S''_2$.

If $x$ and $y$ are from $S(P)$ then (see the proof of Lemma~\ref{sorted-sum-rot})
$$
\rot (x\conc  S'_1) = \alpha - \rot S'_3,\quad \rot (y\conc  S''_1) = \alpha - \rot S''_3
$$
and by the assumption
$$
\rot (x\conc  S'_1) < \rot (y\conc  S''_1) +\gamma.
$$
If $S'_1 = \Sigma \conc y\conc  S''_1$ then 
$$
\rot (x\conc  \Sigma \conc y) + \rot (y\conc  S''_1) < \rot (y\conc  S''_1) +\gamma,
$$
and therefore, $\rot (x\conc  \Sigma \conc y) < \gamma$, which is a contradiction with $\aco P\ge\gamma$. If $x$ and $y$ are both from $S(Q)$, the same contradiction is obtained.

Now assume that $x$ is from $S(Q)$, $y$ is from $S(P)$, then 
$$
\rot (x\conc  S'_2) = \alpha - \rot S'_3,\quad \rot (y\conc  S''_1) = \alpha - \rot S''_3
$$
and by the assumption
$$
\rot (x\conc  S'_2) < \rot (y\conc  S''_1) +\gamma.
$$
By the construction $\rot S'_1 \le \rot (x\conc  S'_2)$ (when $x$ was added to $S_3$), and
$$
\rot S'_1 < \rot (y\conc  S''_1) + \gamma.
$$
If $S'_1 = \Sigma \conc y\conc  S''_1$ then 
$$
\rot (\Sigma \conc y) + \rot (y\conc  S''_1) < \rot (y\conc  S''_1) +\gamma,
$$
and therefore, $\rot (\Sigma\conc y) < \gamma$, which is a contradiction with $\aco P\ge\gamma$.

The case $x$ is from $S(P)$, $y$ is from $S(Q)$ is considered the same way.
\end{proof}

\section{Elimination of self-intersections}

In this section we consider a polyline $P$ that has self-intersection. Then we convert it to a polyline without self-intersections and try to describe how the rotation of $P$ changes. First we need a definition.

\begin{defn}
A polyline $P$ is said to be in \emph{general position}, if 

1) all its vertices are distinct;

2) any two edges intersect in at most one point, the common point being either the common vertex, or in the relative interiors of both edges;

3) any three edges do not have a common point. 
\end{defn}

It is clear that, by arbitrarily small movements of the vertices, a polyline can be put to a general position.

\begin{defn}
Let a polyline $P$ be in general position. Suppose it is given by a piece-wise linear parameterization $P = \{p(t):t\in [a,b]\}$ and has a self-intersection $p(t_1) = p(t_2)$ for some $t_1 < t_2$. Call the \emph{loop removal} the transform that replaces $P$ by the concatenation of polylines $P' = \{p(t) : t\in[a, t_1]\}$ and $P'' = \{p(t) : t\in[t_2, b]\}$. 
\end{defn}

It is clear that generally $\rot P$ changes by a multiple of $2\pi$ after the loop removal. The following lemma tells more.

\begin{lem}
\label{loop-removal} Let $P$ be a polyline in general position. Suppose that the subpolyline of $P$ between $p(t_1)$ and $p(t_2)$ has rotation $>-\pi$. Then the rotation $\rot P$ cannot increase after loop removal.
\end{lem}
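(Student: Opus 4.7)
My plan is to use general position to pin down the local geometry at the self-intersection, then reduce the desired inequality to a count of integer multiples of $2\pi$.

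First I would invoke the general-position hypothesis to identify $p(t_1)=p(t_2)$ as a transverse crossing of two non-adjacent edges $e_1 = v_k v_{k+1}$ and $e_2 = v_l v_{l+1}$ of $P$, lying in their relative interiors. Cutting $P$ at these two points splits it into a prefix $P'$, the loop $P_0$, and a suffix $P''$. The key observation is that the two half-edges of $e_1$ on either side of $p(t_1)$ are both positive scalar multiples of $v_{k+1}-v_k$ and are therefore codirected, and similarly at $p(t_2)$ along $e_2$. Writing $s'$ for the last shift of $P'$, $a$ for the first shift of $P_0$, $b$ for the last shift of $P_0$, and $s''$ for the first shift of $P''$, this yields $\angle(s',a)=0=\angle(b,s'')$. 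Transversality of $e_1$ and $e_2$ at the crossing also forces $s'$ and $s''$ to be non-parallel, so $\angle(s',s'')$ is well-defined and $P'\conc P''$ is non-reverse.

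Next, additivity of rotation across the two cut points (whose junction angles vanish on $P$) gives
$$
\rot P \;=\; \rot P' + \rot P_0 + \rot P'' \qquad \text{while} \qquad \rot(P'\conc P'') \;=\; \rot P' + \angle(s',s'') + \rot P'',
$$
so
$$
\rot P - \rot(P'\conc P'') \;=\; \rot P_0 - \angle(s',s'').
$$
Since $P$ and $P'\conc P''$ have the same first and last shifts, their rotations must agree modulo $2\pi$, so this difference is an integer multiple of $2\pi$.

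Finally, the hypothesis $\rot P_0 > -\pi$ together with $\angle(s',s'') < \pi$ yields $\rot P_0 - \angle(s',s'') > -2\pi$. The only integer multiple of $2\pi$ strictly greater than $-2\pi$ is nonnegative, so $\rot P \ge \rot(P'\conc P'')$, as required. The main subtlety is the first step: unpacking from general position that the two half-edges at each cut are codirected (killing the junction angles on $P$) and that $e_1,e_2$ are transverse (ensuring $s',s''$ are non-parallel); once that is in place, the rest is a short additivity identity and a clean modulo-$2\pi$ argument.
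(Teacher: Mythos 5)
Your proposal is correct and follows essentially the same route as the paper: both arguments reduce to the identity $\rot P - \rot(P'\conc P'') = \rot P_0 - \angle(s',s'')$, observe that this quantity is a multiple of $2\pi$ (you via matching first and last shifts, the paper by closing up the loop $L$ into a closed polyline), and then use $\rot P_0 > -\pi$ and $|\angle(s',s'')|<\pi$ to force it to be nonnegative. Your extra care in checking that the crossing lies in the relative interiors of two transverse edges (so the junction angles vanish and $\angle(s',s'')$ is defined) is exactly what the paper's general-position hypothesis is for.
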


\begin{proof}
Let $p(t_1)$ lie on the edge with direction $s_1$, $p(t_2)$ line on another edge with direction $s_2$. Denote the subpolyline between $p(t_1)$ and $p(t_2)$ by $L$.

If we identify $p(t_1)$ and $p(t_2)$ in $L$ we obtain a closed polyline, so 
$$
\rot L + \angle(s_2, s_1) = 2\pi N,\quad N\in\mathbb Z.
$$
Since $\rot L > -\pi$ and $|\angle(s_2, s_1)|<\pi$, we obtain $\rot L + \angle(s_2, s_1) \ge 0$.

After removal of $L$ the rotation of the new polyline differs from $\rot P$ by $\angle(s_1, s_2) - \rot L = -\angle(s_2, s_1)-\rot L\le 0$, so it does not increase.
\end{proof}

\begin{lem}
\label{loop-elimination}
Let $P$ be a polyline in general position with $\aco P > -\pi$. Then we can remove all loops in $P$ so that its rotation does not increase.
\end{lem}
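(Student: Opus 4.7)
The plan is to induct on the number of self-intersections of $P$, at each step removing an \emph{outermost} loop — one whose parametric range $[t_1,t_2]$ is not properly contained in the parametric range of any other self-intersection of the current polyline. The enclosed subpolyline $L$ of such a loop is obtained from a block of consecutive edges of $P$ by truncating only its first and last edges, which does not change any angle between consecutive shifts, so $\rot L$ equals the rotation of the subpolyline of $P$ spanning those edges and is therefore at least $\aco P > -\pi$. Lemma~\ref{loop-removal} applies and yields a polyline $P'$ with $\rot P' \le \rot P$ and strictly fewer self-intersections.

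The inductive hypothesis I need is \emph{not} $\aco > -\pi$, which fails to be preserved by loop removal — removing a positively-oriented inner loop drops the rotation of every straddling subpolyline of the resulting polyline by $2\pi$. Instead I carry through the induction the weaker invariant that every subpolyline of the current polyline enclosed by a self-intersection has rotation $> -\pi$. This is exactly what Lemma~\ref{loop-removal} consumes, and it holds for the original $P$ because $\aco P > -\pi$.

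The heart of the argument is checking that this invariant survives one outermost removal. Every self-intersection of $P$ other than the removed one falls into one of three configurations with respect to $[t_1,t_2]$: \emph{contained}, with both its parameters in $(t_1,t_2)$; \emph{interleaved}, with exactly one; or \emph{disjoint}, with both outside $[t_1,t_2]$. In the first case both parameters are removed and the self-intersection vanishes; in the second, exactly one is removed, so the corresponding point is visited only once by $P'$ and the self-intersection again vanishes; the fourth a priori case, a self-intersection strictly containing $[t_1,t_2]$, is excluded by outermost-ness. Since edges of $P'$ are sub-edges of edges of $P$, no new self-intersections can appear. For each surviving (disjoint) self-intersection, its enclosed subpolyline in $P'$ uses the same edges in the same order as in $P$, hence has the same rotation, which is still $> -\pi$. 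The invariant is maintained, the number of self-intersections strictly drops, and the induction closes.

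The main obstacle I expect is precisely this non-preservation of $\aco$; it is what would block a naive ``innermost first'' strategy, because an inner loop of winding $+1$, once removed, drops by $2\pi$ the rotations of all outer loops that contain it, potentially past $-\pi$. Picking outermost loops sidesteps this entirely: every other self-intersection is either destroyed outright or left completely untouched, and never deformed.
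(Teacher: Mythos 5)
Your proof is correct, but it orders the removals differently from the paper, and the difference is substantive. The paper removes at each step the \emph{first} self-intersection met along the parameterization (so each removed loop is a simple closed subcurve), and justifies the repeated use of Lemma~\ref{loop-removal} by the one-line remark that the removed parameter segments are disjoint and each removed subpolyline has rotation $>-\pi$. That remark is exactly where the difficulty you identify lives: with first-met removal, a later loop can \emph{straddle} an earlier removed one (its two parameters lying on opposite sides of the removed interval), so the later removed piece is not a contiguous subpolyline of the original $P$, and its rotation may have dropped by a positive multiple of $2\pi$; the paper does not spell out why this is harmless. Your outermost-first order eliminates straddling by construction: every other self-intersection is either annihilated by the removal or has both parameters on the same side of the removed interval, hence its enclosed subpolyline is untouched, and your weaker invariant (every subpolyline enclosed by a remaining self-intersection has rotation $>-\pi$) propagates cleanly. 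The trade-off is that your removed loops are generally not simple closed curves, but Lemma~\ref{loop-removal} never required that. In short, you prove the statement by the same basic mechanism (iterate Lemma~\ref{loop-removal}) but with a removal order and invariant that make the induction airtight where the paper's justification is only asserted.
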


\begin{proof}
Consider all the self-intersections in $P$, choose the first (w.r.t. the parameterization) such point $p(t_1)=p(t_2)$, and remove the loop between $p(t_1)$ and $p(t_2)$. Then continue such steps until all the self-intersections are removed. It is clear that the removed segments of parameterization do not intersect, each removed subpolyline had angular convexity $>-\pi$ and by Lemma~\ref{loop-removal} the angular convexity never increased.
\end{proof}

An example of removing all loops is shown in Figure~\ref{loop-removal-pic}.

\begin{figure}
\includegraphics{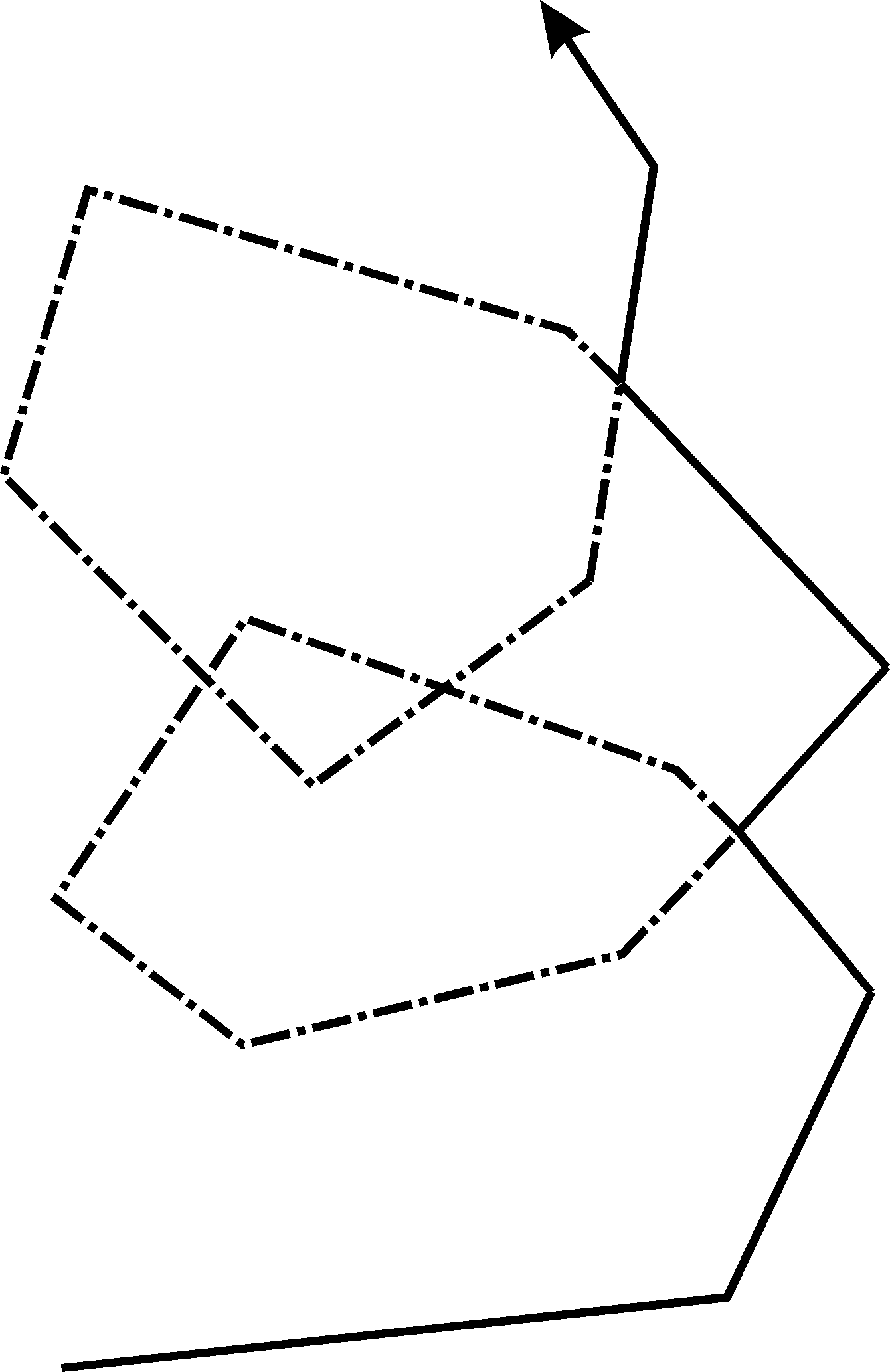}
\caption{Removing loops in a polyline.}
\label{loop-removal-pic}
\end{figure}

\section{Proof of Theorem~\ref{aconv-sep}}

Let us give some definitions.

\begin{defn}
A convex hull of two rays $r_1$ and $r_2$ with common apex is called an \emph{angular region}.
For an angular region $A$ we denote its \emph{angular measure} $\angle A = |\angle (r_1, r_2)|$.
\end{defn}

We generalize the notion of rotation to piece-wise smooth curves.

\begin{defn}
Let $C=\{c(t) : t\in [a, b]\}$ be a smooth curve with non-zero derivative. Then the unit tangent
$$
\tau(t) = \frac{c'(t)}{|c'(t)|}
$$
is well defined and can be considered as a continuous map $\tau : [a, b]\to S^1$ to the unit circle. Consider the universal cover $\kappa : \mathbb R\to S^1$ given by the anti-clockwise angle. The map $\tau$ is lifted continuously to a map $\tilde\tau : [a, b] \to \mathbb R$ so that $\tau = \kappa\circ\tilde\tau$. 

Then the \emph{rotation number} of $C$ is $\tilde\tau(b) - \tilde\tau(a)$. 
\end{defn}

\begin{defn}
Let $C=\{c(t) : t\in [a, b]\}$ be a piece-wise smooth curve, composed as a concatenation of smooth curves $C_1\conc\dots \conc C_n$. Denote the values of the parameter, where $C_i$ is changed to $C_{i+1}$ by $t_i$. We require the vectors $c'_-(t_i)$ and $c'_+(t_i)$ (left and right derivative) to be non-opposite.

Then the \emph{rotation number} of $C$ is
$$
\rot C = \sum_{i=1}^n \rot C_i + \sum_{i=1}^{n-1} \angle(c'_-(t_i), c'_+(t_i)).
$$ 
\end{defn}

It is clear that if the curve $C$ is approximated by a polyline $P$ with small enough step, then $\rot C = \rot P$. The angular convexity of a curve is defined similarly.

Now we are going to prove Theorem~\ref{aconv-sep}. The polyline $P=\partial K$ can be approximated by a smooth curve as follows: let $v_i$ be its vertex with nonzero angle $\angle(v_i-v_{i-1}, v_{i+1} - v_i)$, in a small neighborhood of $v_i$ we replace the union of two small equal segments of $P$ (the first segment is from $v_i$ back along $\partial P$, the second is from $v_i$ forth along $\partial P$) by a circular arc $A_i$, so that the tangent becomes continuous along the resulting curves. If the segments are taken small enough, then the new boundary $P'$ has no self-intersections. If we denote by $K'$ the region, bounded by $P'$, then the (tested for the separation property) point $x$ still lies outside $P'$ for close enough approximation.

Note that $\rot A_i = \angle(v_i-v_{i-1}, v_{i+1} - v_i)$ by the construction, so it is clear that $\aco P' = \aco P = \gamma$.

For any two points $p_1, p_2\in P'$ let us denote $[p_1, p_2]_{P'}$ the subcurve of $P'$, oriented along $P'$, that starts at $p_1$ and ends at $p_2$. Consider the following functions of a point $p\in P'$
$$
\gamma_+(p) = \min_{q\in P'} \rot[p, q]_{P'},\quad \gamma_-(p) = \min_{q\in P'} \rot[q, p]_{P'}.
$$
It is clear that they are continuous and $\gamma_-(p) + \gamma_+(p) \ge \gamma$.

Take the rays $r_+(p)$ and $r_-(p)$ with apex $p$ so that 
$$
\angle(\tau(p), r_+(p)) = \gamma_+(p),\quad \angle(r_-(p), \tau(p)) = \gamma_-(p).
$$
Note that the rays $r_+(p)$ and $r_-(p)$ point outside or tangentially to $K'$, and the angle between them is at least $\pi + \gamma > 0$. Denote the angular region $A(p)=\conv (r_+(p)\cup r_-(p))$. 

We are going to show that $K'\cap \int A(p)=\emptyset$. Assume the contrary, in this case $P'$ has to pass through the interior of $A(p)$. The situation is outlined in Figure~\ref{separation-pic}.

\begin{figure}
\includegraphics{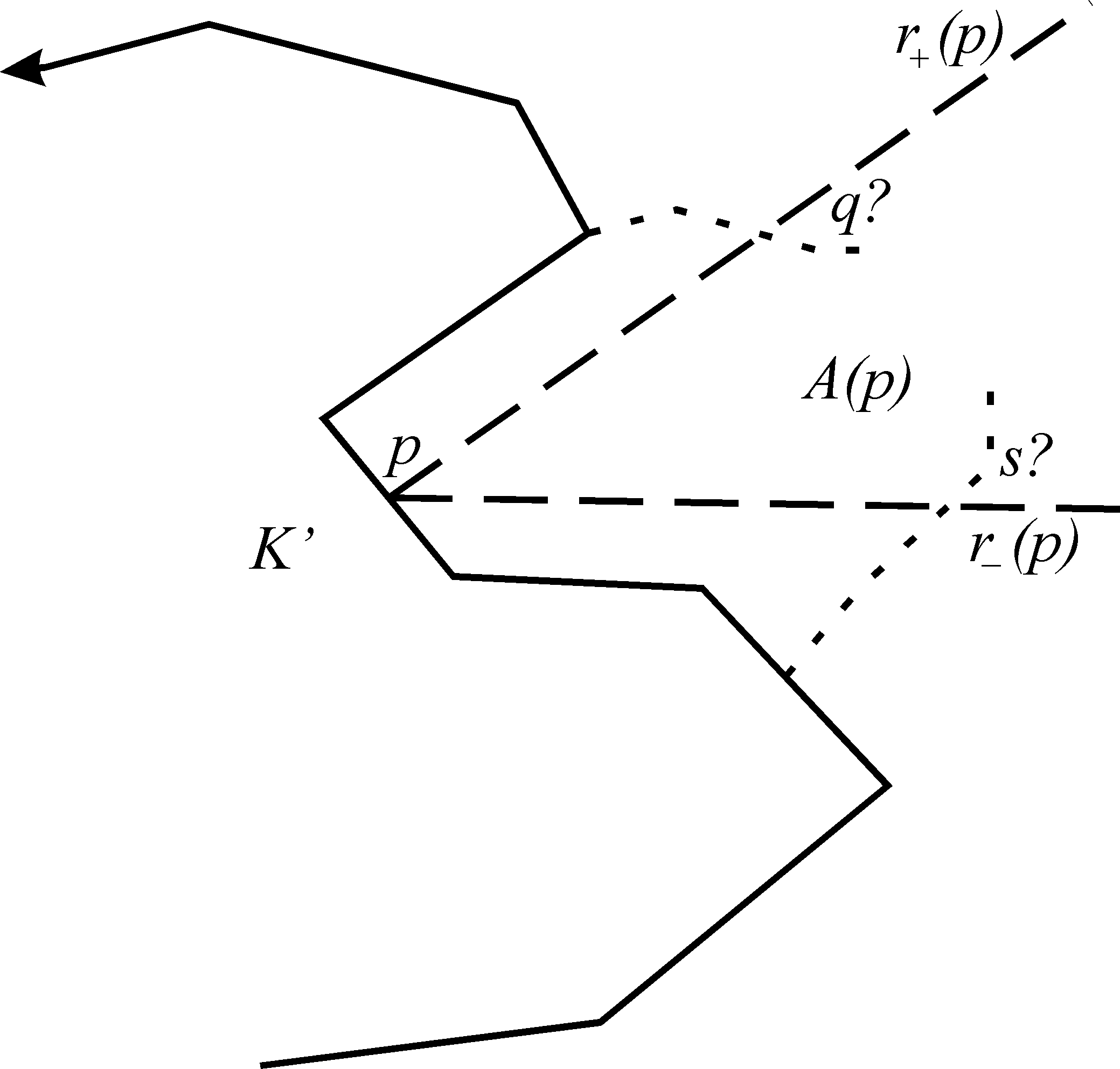}
\caption{The polygon $K'$ and the angular region $A(p)$.}
\label{separation-pic}
\end{figure}

Let us move the point $q$ from $p$ along $P'$. Note the first time $q$ gets inside $\int A(p)$. Suppose $q$ intersects the ray $r_+(p)$, then the rotation of the closed (piece-wise smooth) non-self-intersecting curve $[p,q]_{P'} \conc [q,p]$ (the last segment is a straight line segment) is $-2\pi$, and it is clear that $\rot [p, q]_{P'}$ should be less than $\gamma_+(p)$, which is a contradiction.

The only possibility left is that $q$ (when moving from $p$ along $P'$) gets into $A(p)$ across $r_-(p)$. Let $q$ be the first such point on $r_-(p)$, in this case we consider the closed non-self-intersecting curve $[p, q]_{P'} \conc  [q, p]$ that bounds a region $L$. If we move a point $s$ backwards from $p$ along $P'$, then it should go out of $L$, in fact it has to intersect $[q,p]$ and get into $\int A(p)$. Let $s$ be the first such point on $[q,p]$, in this case the closed simple curve $[s,p]_{P'} \conc  [p, s]$ has rotation $-2\pi$, and $\rot [s, p]_{P'} < \gamma_-(p)$, which is a contradiction again. 

Thus the theorem is proved for points $x$, close enough to $P'$, now we prove it for any $x$ outside $P'$. For $p\in P'$ denote $\nu(p)$ the direction of the bisector of $A(p)$, and $\mu(p)$ the unit direction of $x-p$. They give two maps $\nu : P'\to S^1$ and $\mu : P'\to S^1$. The map $\nu$ is homotopic to the Gauss map of $P'$, so $\deg \nu = 1$. The map $\mu$ is homotopic to a constant map (the homotopy is obtained by moving $x$ to infinity without crossing $P'$), so $\deg \mu = 0$. Thus $\nu$ and $\mu$ should coincide on some $p$. The other (more elementary) way to prove the coincidence is to note that the rotation of the vector $\nu(p)$ when $p$ moves along $p'$ is $2\pi$, and the rotation of $\mu(p)$ is zero.

It is easy to see that for such $p$ the point $x$ lies inside $A(p)$, and the angular region $A(p) + x - p\subset\int A(p)$ gives what we need. 

\section{Proof of Theorem~\ref{aconv-add}}

Denote $\gamma = \min\{\aco K, \aco L\}$. Consider the set $M=K+L$. Later we prove that it is simply connected (has no \emph{holes}, i.e. bounded connected components of the complement $\mathbb R^2\setminus M$), but now we assume that the holes may exist and denote its outer piece of boundary by $B$ (the boundary of the component of $\infty$ of $\mathbb R^2\setminus M$). We are going to show that $\aco B\ge \gamma$.

Suppose that for some two points $p, q\in B$ we have $\rot [p, q]_B = \delta < \gamma$ (the notation is the same as in the proof of Theorem~\ref{aconv-sep}). We may assume that $p$ and $q$ lie inside the edges of $B$. In this case from the definition of the Minkowski sum we deduce that
$$
p = a+b,\ a\in K,\ b\in L,\quad q = c+d,\ c\in K,\ d\in L.
$$

One of the points $a$ and $b$ (let it be $a$) lies on an edge (since $p$ lies on an edge) of $\partial K$  with outer normal $\nu$. The other point $b$ either lies on an edge of $L$ with the same normal, or is a vertex of $L$. 

In the latter case the inner product $(\nu, x)$ on some small enough neighborhood of $b$ in $L$ attains its maximum exactly at $b$, in this case we can insert a ``virtual edge'' to $L$ of length zero at point $b$, so that it has normal $\nu$. This virtual edge does not affect the rotation of any subpolyline of $\partial L$. In the sequel we assume that $a, b, c, d$ lie on (possibly virtual) edges, the edges of $a, b, p$ have the same direction, and the edges of $c,d,q$ have the same direction.   

Note that if $\aco \partial K>-\pi$, then for any subpolyline $[x, y]_{\partial K}$ its rotation is less than $3\pi$, otherwise the subpolyline $[y,x]_{\partial K}$ would have the rotation $\le-\pi$. Now we see that $\rot [a, c]_{\partial K} = \delta + 2\pi N$, $N\in\mathbb Z$. From the angular convexity condition and the above note we deduce that $\rot [a, c]_{\partial K} = \delta + 2\pi$. So we have 
$$
\rot[c, a]_{\partial K} = \rot[d, b]_{\partial L} = -\delta.
$$

Now consider the sorted sum $S = [c, a]_{\partial K}+_s [d, b]_{\partial L}$. By Lemma~\ref{sorted-sum-rot} and Lemma~\ref{sorted-sum-aco} we have $\rot S = -\delta$, $\aco S \ge \gamma > -\pi$. Lemma~\ref{sorted-sum-param} shows that $S\subseteq M$. The situation is illustrated in Figure~\ref{sum-aco-pic}.

\begin{figure}
\includegraphics{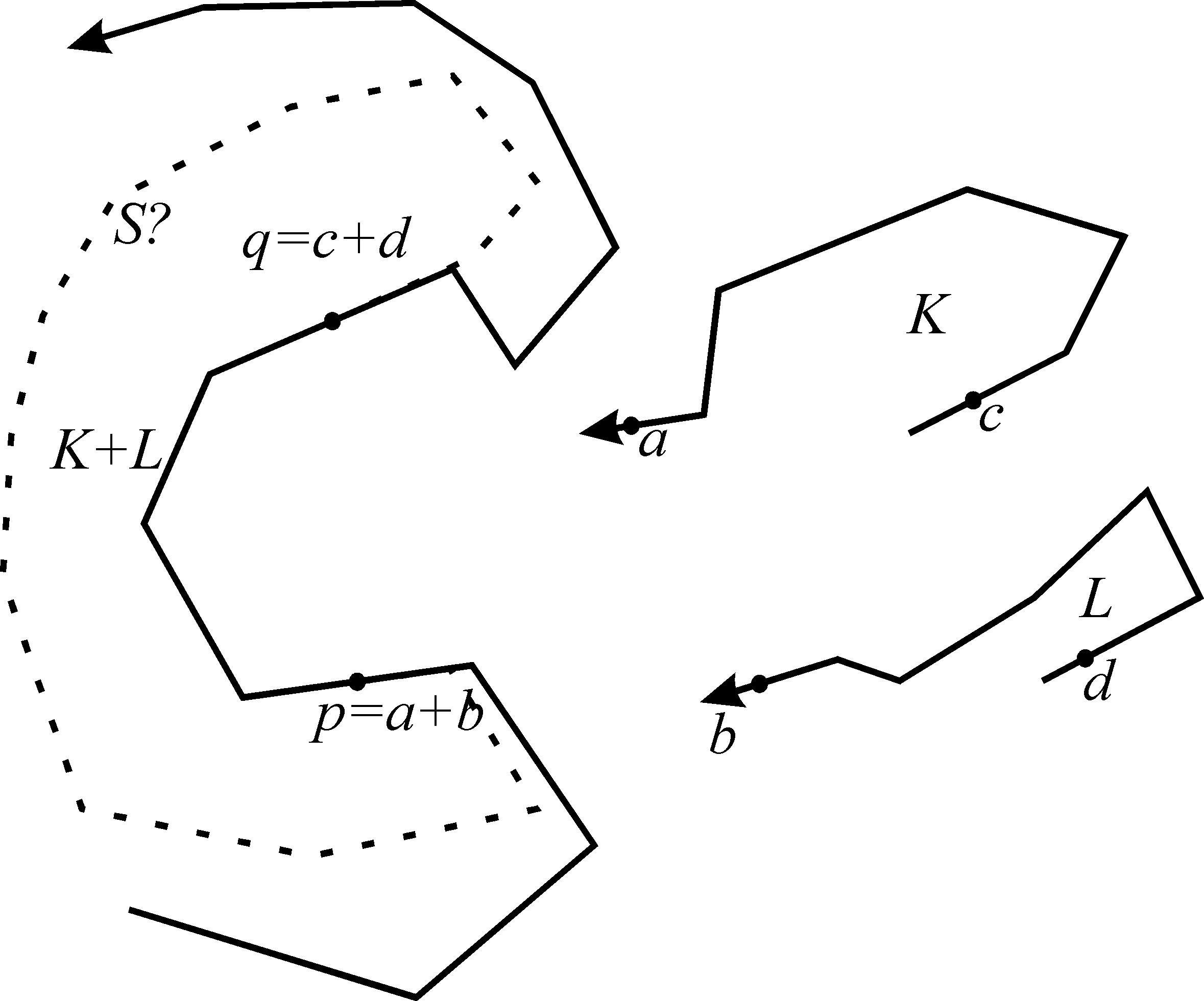}
\caption{Points $p=a+b$, $q=c+d$, and the polyline $S=[c, a]_{\partial K}+_s [d, b]_{\partial L}$.}
\label{sum-aco-pic}
\end{figure}

Note that $M=\cl\int M$, $\int M$ is connected, so we may put $S$ to general position so that it remains in $M$ and the only points of $S\cap \partial M$ are $p$ and $q$, we assume that the rotation of $S$ under this perturbation changes by arbitrarily small value, and $\aco S$ remains $>-\pi$. 

Then we eliminate loops on $S$ by Lemma~\ref{loop-elimination} and obtain a simple polyline $S_0\subseteq M$ with $\rot S_0 \le -\delta$. From Lemma~\ref{sorted-sum-param} we know that $S_0$ goes from $q$ to $p$. Moreover, by the construction of the sorted sum its edge directions at $p$ and $q$ coincide (up to some small change on passing to general position) with the edge directions of $[p, q]_B$ (edges may be virtual). So the closed polyline $C = S_0\conc [p,q]_B$ has rotation number $\le 0$ and does not have self-intersection. 

Hence $\rot C = -2\pi$, and the region that it bounds is to the right of $C$. But $C\subset M$, and along the polyline $[p, q]_B$, the set $M$ is to the left of $[p, q]_B$ ($B$ is oriented anti-clockwise), which is a contradiction. Thus we have proved that $\aco B \ge \gamma$.

Now we are going to prove that $M$ actually has no holes. Assume the contrary: a point $x\in\mathbb R^2\setminus M$ is not connected to $\infty$ in the complement of $M$. Let us perturb the vertices of $K$ and $L$ by values $<\delta$. If $\delta$ is chosen small enough then $\aco K, \aco L>-\pi$ condition is kept, and $x$ remains outside $M$. Moreover, if $M$ does not have a hole containing $x$ after perturbation, then the complement $\mathbb R^2\setminus M$ contains by Theorem~\ref{aconv-sep} an angular region $A$ with apex $x$ and bounded from below measure $\ge(\pi+\gamma)/2$. Hence for small enough $\delta$ the region $A$ cannot be ``blocked'' by returning to the non-perturbed $K$ and $L$, we have a contradiction. Thus we have proved that for small enough ($<\delta$) perturbations of $K$ and $L$ their sum $M$ still has a hole. i.e. the property of having a hole is open.

We are going to consider the parameterized family of polygons 
$$
M(t) = K+tL.
$$
Note that the edges of $M(t)$ are contained in the lines (we define lines by two points)
$$
\{(v_1 + tw_1, v_2+tw_1), (v_1 + tw_1, v_1+tw_2)\},
$$
where $v_1,v_2\in K$ and $w_1,w_2\in L$ are vertices. By perturbing $K$ and $L$ we may require that all such lines (there is a finite number of them) have different directions. We also require that when $t$ changes in the range $(0,+\infty)$, no $4$ of the lines meet at a single point, this can be achieved by perturbation.

If the parameter $t$ is close enough to zero, then the polygon $M(t)$ has the following structure: near an edge of $K$ it has a ``long'' edge, and near a vertex of $K$ it may be complicated. But since $K$ is an angular region $K\cap U(v)$ (possibly concave) in some neighborhood of a vertex $v\in K$, then $M(t)$ in $U(v)$ (and for small enough $t$) is a union of a family of translates of $K\cap U(v)$, that is starshaped. Hence, for small enough $t$, locally $M(t)$ has the same topology as $K$, and cannot have holes in global. 

Now consider 
$$
t_0 = \inf \{t : K+tL\ \text{has a hole}\},
$$
it is already proved that $t_0>0$, assume $t_0<+\infty$. Since the ``has hole'' property is open, $M(t_0)$ has no holes. Consider what could happen if we increase $t$ by a small value $<\delta$. From the general position assumption $M(t)$ may be changed by adding some extra edge instead of a vertex, but this cannot give a hole. Hence $t_0=+\infty$ and the proof is complete.

\end{document}